\documentclass[11pt,a4paper]{article}

\usepackage[utf8]{inputenc}
\usepackage{amsmath}
\usepackage{amssymb}
\usepackage{amsthm}
\usepackage{hyperref}

\usepackage{orcidlink}
\usepackage{authblk}
\usepackage{enumitem}

\DeclareMathOperator{\Hom}{Hom}

\DeclareMathOperator{\ud}{ud}

\def\udw{\  \ensuremath{{\bf\ud_w}}\  }

\newcommand{\T}{\mathbb{T}}
\newcommand{\Z}{\mathbb{Z}}
\newcommand{\N}{\mathbb{N}}

\def\sub{\subseteq}

\def\into{\longrightarrow}
\def\sub{\subseteq}
\def\setdiff{\backslash}

\def\dualGamma{\widehat{\Gamma}}

\def\higherW{\mathcal{W}}
\def\higherN{\mathfrak{N}}

\def\arbGroup{\mathcal{G}}

\def\KroU{\mathbb{U}}

\newtheorem{theorem}{Theorem}[section]
\newtheorem{lemma}[theorem]{Lemma}

\title{A Kronecker-Weyl Theorem from Interpolation Sets}
\author{Rafael Reno S. Cantuba\thanks{Associate Professor, Department of Mathematics and Statistics, De La Salle University (DLSU), Taft Ave., Manila, Philippines, supported by a grant from the Reserach Grants Management Office of DLSU, grant no.: 02FR1TAY24-1TAY25, email: rafael.cantuba@dlsu.edu.ph}\\ ORCID: 0000-0002-4685-8761\orcidlink{0000-0002-4685-8761}}
\date{}

\begin{document}

\maketitle

\begin{abstract}
An analog of the classical Kronecker–Weyl theorem for weak uniform distribution is obtained for an arbitrary countably infinite independent subset of a discrete abelian group using a type of interpolation sets called $\varepsilon$-Kronecker sets. Specifically,  the topological size of the set of homomorphisms inducing weakly uniformly distributed sequences is established by showing that it forms a dense $G_\delta$ set.
\end{abstract}

\vspace{0.5em}
\noindent
\begin{quote}
    \small
    \textbf{Keywords:} Kronecker set, Diophantine approximation, uniform distribution, interpolation set, compact group
    
    \vspace{0.5em}
    \textbf{2020 Mathematics Subject Classification:} 43A46, 11J71
\end{quote}

\section{Introduction}

The theory of Diophantine approximation is said to seek to understand how well, that is, how closely, real numbers can be ``trapped'' by relations with the integers \cite[p.~1]{hla91}. Some key results in Diophantine approximations are the classical approximations of Dirichlet and Kronecker. The latter may be proven using the former \cite[Chapter~2]{hla91}, and this ``Kronecker Approximation Theorem'' may also be stated in terms of uniform distribution of sequences \cite[Chapter~1]{kui74}, or more precisely, the ``Weyl criterion'' \cite[p.~4777]{dik11}, and hence, we further have the ``Kronecker-Weyl Theorem'' for the integers. Motivation may be taken from \cite{dik11}  on how topological groups may be used to obtain generalizations of the Kronecker-Weyl Theorem. Topological groups are the main vehicle in abstract harmonic analysis \cite[p.~v]{rud62}, and we saw it as an opportunity to further explore the use of topological groups and abstract harmonic analysis in Diophantine approximations.

Throughout, any adjective that appears before ``group'' or ``abelian group'' refers to the underlying topology that makes the group a topological group, which means that the group operation and inversion are both continuous. If we let $C(\arbGroup)$ be the collection of all continuous complex-valued functions on a topological group $\arbGroup$, and if we let $\Hom(\arbGroup,\T)$ be the collection of all group homomorphisms of $\arbGroup$ into the compact group $\T$ of all complex numbers with modulus one, then the dual group of $\arbGroup$ is $\widehat{\arbGroup}:=C(\arbGroup)\cap\Hom(\arbGroup,\T)$. 

A sequence $(x_n)$ in a compact abelian group $K$ is \emph{uniformly distributed} if, for each nontrivial character $\chi$ in the dual group of $K$, 
\[
\lim_{N\rightarrow\infty}\frac{1}{N}\sum_{n=1}^N\chi(x_n)=0.
\]
In symbols, we write: $(x_n)\ud K$.
Given a discrete abelian group $\Gamma$, and a sequence $(a_n)$ in $\Gamma$, if $E\sub\Gamma$ is the collection of all terms in the sequence $(a_n)$, then
\[
\KroU_0(E,K):=\{h\in\Hom(\Gamma,K)\  :\  (h(a_n))\ud K\}.
\]
Following \cite[p.~4777]{dik11}, the setting of the classical Kronecker-Weyl theorem, in metric number theory and Diophantine approximations, is about how ``large'' the set $\KroU(E,K)$ is when $\Gamma$ is the discrete group $\Z$ of all integers, and when $K=\T$.

In the study \cite{dik11}, less preference was given to the sets $\KroU_0(E,K)$ in favor of some ``natural topological counterpart'' of these sets because for the former, there is no natural order on an arbitrary abelian group that allows us to index $E$ \cite[p.~4778]{dik11}.

We show in this work, however, that a Kronecker-Weyl-type theorem may still be achieved in this direction once we incorporate results from harmonic analysis. In particular, we shall be using conditions on $E$ that make it an ``interpolation set'' in the sense of \cite{gra13}. More precisely, a sufficient condition we have for $\KroU_0(E,K)$ to be a dense $G_\delta$ set involves the presence in $E$ of what are called $\varepsilon$-Kronecker sets, a localized form of Kronecker sets, which are naturally related to the classical Kronecker approximation theorem \cite[p.~43]{gra13}.

For simpler arguments, we shall make use in this paper of a weaker version of uniform distribution in discrete abelian groups, which is the following. A sequence $(x_n)$ in a compact abelian group $K$ is \emph{weakly uniformly distributed} if, for each nontrivial character $\chi$ in the dual group of $K$, and each positive integer $k$, there exists a positive integer $N$ such that
\[
\left|\frac{1}{N}\sum_{n=1}^N\chi(x_n)\right|<\frac{1}{k},
\]
which we write in symbols as $(x_n)\udw K$. The corresponding sets are
\[
\KroU(E,K):=\{h\in\Hom(\Gamma,K)\  :\  (h(a_n))\udw K\}.
\]
Thus, $\KroU_0(E,K)\sub\KroU(E,K)$, and establishing the topological size of $\KroU(E,K)$ provides a natural Kronecker-Weyl-type theorem for weak uniform distribution. We prove that $\KroU(E,\T^\T)$ is a dense $G_\delta$ set in this setting.

\section{Preliminaries}\label{PrelSec}

By an \emph{arc} along $\T$ we shall mean the image, under the mapping $s\mapsto e^{is}$, of an open interval $I\sub[-\pi,\pi)$, with length less than $\pi$. The assumption on length apparently seems unnecessary for the intuitive notion of arc, but it shall serve a purpose in an argument that shall come shortly. Let $Q$ be a neighborhood of the identity $1\in\T$, which we assume, without loss of generality, to be an arc. By the continuity of an arbitrary nontrivial character $\chi\in\widehat{\T^\T}$, the set $\chi^{-1}[Q]$ contains a neighborhood $\higherW$ of the identity in $\T^\T$, which is the constant function $1:e^{i\theta}\mapsto 1$.

Given $t\in\T$ and an arc $U\sub\T$, if $S(t,U)$ is the inverse image of $U$ under the projection map $\T^\T\into\T$ given by $f\mapsto f(t)$, then the collection of all such sets $S(t,U)$ form a subbasis for the product topology on $\T^\T$. Thus, for the basic neighborhood $Q$ of $1\in\T$, there exists a finite subset $F=F(Q)=\{t_1,t_2,\ldots,t_M\}$ of $\T$ such that $\bigcap_{k=1}^MS(t_k,U_k)=\higherW\sub \chi^{-1}[Q]$, where, for each $k\in\{1,2,\ldots,M\}$, we have $1=1(t_k)\in U_k$. By a routine argument, $\higherN_F:=\{f\in\T^\T\  :\  f[F]=\{1\}\}\sub S(t_k,U_k)$ for all $k$, and we further have $\higherN_F\sub\chi^{-1}[Q]$. Applying $\chi$ to both sides, we further obtain $\chi[\higherN_F]\sub Q$.

The earlier assumption that any arc has (its inverse image under $s\mapsto e^{is}$ of) length less than $\pi$ implies that the proper subset $Q$ of $\T$ does not contain a nontrivial subgroup of $\T$, and this is often referred to in the literature as the property of $\T$ that it has \emph{no small subgroups}. See, for instance, \cite[Section~II.1]{kap71}. The set $\higherN_F$ is a subgroup of $\T^\T$, and since $\chi$ is a group homomorphism and $\chi[\higherN_F]\sub Q$, the only possibility is $\chi[\higherN_F]=\{1\}$.

Given $x\in\T^\T$, and $t_k\in\{t_1,t_2,\ldots,t_M\}=F$, if we define $w_k\in\T^\T$ by the rule that $w_k(t)=x(t)$ if $t=t_k$ and otherwise $w_k(t)=1$, then by a routine argument, evaluating the function $x(w_1 w_2\cdots w_M)^{-1}$ at an arbitrary $t\in F$ leads to $x(w_1 w_2\cdots w_M)^{-1}\in\higherN_F$. Since $\chi[\higherN_F]=\{1\}$, we have $\chi(x(w_1 w_2\cdots w_M)^{-1})=1$. Since $\chi$ is a group homomorphism, $\chi(x)=\chi(w_1)\chi(w_2)\cdots\chi(w_M)$. 

Given $k\in\{1,2,\ldots,M\}$, define $c_k:\T\into \T^\T$ by the rule that $c_k(\theta)$ is the function that sends $t\in\T$ to $\theta$ if $t=t_k$ and to $1$ otherwise. By routine arguments, we have the function equality $w_k = c_k(x(t_k))$ for each $k\in\{1,2,\ldots,M\}$. Also by routine arguments, $\theta\mapsto(\chi\circ c_k)(\theta)$ is a continuous group homomorphism of $\T$, or is a character in $\widehat{\T}$, so there exists an integer $n_k$ such that $(\chi\circ c_k)(\theta)=\theta^{n_k}$ for all $\theta\in\T$. Evaluating this character identity at the specific value $\theta = x(t_k)$ yields $\chi(w_k) = \chi(c_k(x(t_k))) = [x(t_k)]^{n_k}$ for each $k$.

At this point, we have proven that for each nontrivial character $\chi \in \widehat{\T^\T}$, there exist a finite subset $F = \{t_1, t_2, \ldots, t_M\}$ of $\T$ and nonzero integers $n_1, n_2, \ldots, n_M$ such that 
\begin{flalign}
    && \chi(x) &=[x(t_1)]^{n_1}[x(t_2)]^{n_2}\cdots[x(t_M)]^{n_M}, & (x \in \T^\T).\label{chiproduct0}
\end{flalign}

We fix a countable dense subset $\T_0$ of $\T$. The collection of all characters $\chi \in \widehat{\T^\T}$ which, when decomposed as in \eqref{chiproduct0}, possess the property that the associated points $t_1, t_2, \ldots, t_M$ belong to $\T_0$, is countable. Consequently, we may enumerate these characters as a sequence $(\chi_\mu)$.

For the discrete abelian group $\Gamma$, suppose that, for some indexing set $\Omega$, the collection $\{U_m\  :\  m\in\Omega\}$ is a basis for the topology on $\dualGamma^\T$. For each finite subset $\Lambda$ of $\T\times\Omega$, we define
\begin{equation}
  \higherW(\Lambda):=\{h\in\dualGamma^\T\  :\  (t,m)\in\Lambda\implies h(t)\in U_m\} .\label{Wbasis}
\end{equation}
Since $\Gamma$ is discrete, $\dualGamma=C(\dualGamma)\cap\Hom(\dualGamma,\T)=\Hom(\dualGamma,\T)$. We further have the topological group isomorphism $\dualGamma^\T \cong \Hom(\Gamma,\T^\T)$, and for the topology of this space, the collection of all sets \eqref{Wbasis} is a basis.

Suppose $E$ is countably infinite subset of $\Gamma$,  and the sequence $(e_n)$ an enumeration of the elements of $E$. Since $\T_0$ is also countably infinite, we fix a sequence $(t_n)$ that is an enumeration of the elements of $\T_0$. 

Furthermore, for each character $\chi_\mu$ decomposed as in \eqref{chiproduct0} with points $\tau_1, \dots, \tau_M \in \T_0$, we choose a map $\phi_\mu : \{\tau_1, \ldots, \tau_M\} \into E$ that assigns to each evaluation point $\tau_m \in \T_0$ a corresponding element $g_m := \phi_\mu(\tau_m) \in E$.

Thus, for any character $y \in \dualGamma$, \eqref{chiproduct0} becomes
\begin{flalign}
    && \chi_\mu(y\circ\phi_\mu) &=[y(g_1)]^{n_1}[y(g_2)]^{n_2}\cdots[y(g_M)]^{n_M}, & (\mu\in\N,\  y \in \dualGamma).\label{chiproduct}
\end{flalign}

\section{Weakly uniformly distributed sequences in a discrete abelian group}

We now give a decomposition of the set $\KroU(E,\T^\T)$ that shall aid in proving its size. For each triple $(N,\mu,k)\in\N^3$, we define
\begin{equation}
O_{N, \mu, k} := \left\{ h \in \dualGamma^\T \ :\  \left| \frac{1}{N} \sum_{n=1}^{N} \chi_{\mu}\big(h(t_n)\circ\phi_\mu\big) \right| < \frac{1}{k} \right\}.
\label{Odef_clean}
\end{equation}
Under the aforementioned choices of $t_n$ and $\phi_\mu$, for any $h \in \dualGamma^\T$ and its image $\widetilde{h} \in \Hom(\Gamma, \T^\T)$ under the topological group isomorphism $\dualGamma^\T \cong \Hom(\Gamma, \T^\T)$ defined by $\widetilde{h}(e)(t) = h(t)(e)$, we have
\[
\chi_\mu\big(h(t_n) \circ \phi_\mu\big) = \prod_{m=1}^M \big[ h(t_n)(\tau_m) \big]^{n_m} = \prod_{m=1}^M \big[ \widetilde{h}(\tau_m)(t_n) \big]^{n_m} = \chi_\mu\big(\widetilde{h}(e_n)\big).
\]
Thus, the set $O_{N, \mu, k}$ has an alternative characterization in $\Hom(\Gamma, \T^\T)$, which is
\begin{equation}
O_{N, \mu, k} = \left\{ h \in \Hom(\Gamma, \mathbb{T}^\mathbb{T}) \ :\ \left| \frac{1}{N} \sum_{n=1}^{N} \chi_{\mu}\big(h(e_n)\big) \right| < \frac{1}{k} \right\}.
\label{Odef_hom_version}
\end{equation}

\begin{lemma}\label{MainLem1}  $\KroU(E,\T^\T)=\bigcap_{\mu=1}^\infty\bigcap_{k=1}^\infty\bigcup_{N=1}^\infty O_{N,\mu,k}.$
\end{lemma}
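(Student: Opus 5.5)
This is a double inclusion obtained by unwinding the definitions. We read $\KroU(E,\T^\T)$ inside $\Hom(\Gamma,\T^\T)$, with $h(e_n)$ as the $n$-th term of the sequence indexed by the fixed enumeration $(e_n)$ of $E$. We also work throughout with the characterization \eqref{Odef_hom_version} of $O_{N,\mu,k}$, which the paper has just derived from \eqref{Odef_clean} through the isomorphism $\dualGamma^\T\cong\Hom(\Gamma,\T^\T)$. With this, membership of $h$ in $\bigcap_{\mu}\bigcap_{k}\bigcup_N O_{N,\mu,k}$ says exactly this: for every $\mu$ and every $k$ there is an $N$ with $\bigl|\frac1N\sum_{n=1}^N\chi_\mu(h(e_n))\bigr|<\frac1k$. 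This is the defining condition of $(h(e_n))\udw\T^\T$, except that it is imposed only for the enumerated characters $\chi_\mu$ rather than for all nontrivial $\chi\in\widehat{\T^\T}$.

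For the inclusion $\sub$, take $h\in\KroU(E,\T^\T)$. Each $\chi_\mu$ is a nontrivial character, since its decomposition \eqref{chiproduct0} has nonzero exponents $n_m$ and can be taken with distinct points. So weak uniform distribution applied to $\chi=\chi_\mu$ and the given $k$ supplies an $N$ with $h\in O_{N,\mu,k}$. This direction is immediate.

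For the inclusion $\supseteq$, take $h$ in the right-hand side and an arbitrary nontrivial $\chi\in\widehat{\T^\T}$. By \eqref{chiproduct0}, $\chi(x)=\prod_{m=1}^M[x(t_m)]^{n_m}$ for some finite set $F=\{t_1,\ldots,t_M\}\sub\T$ and nonzero integers $n_m$. If $F\sub\T_0$, then $\chi=\chi_\mu$ for some $\mu$ and we are done. The main obstacle is the case $F\not\sub\T_0$, because the countable family $(\chi_\mu)$ does not exhaust $\widehat{\T^\T}$. My first attempt is to exploit the fact that the only properties of $h$ that matter are the averages of $\chi(h(e_n))$, and these depend only on the coordinate functions $e\mapsto h(e)(t_m)$ for $t_m\in F$. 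I would try to show that the conditions in the lemma are invariant under relabelling the finitely many coordinates of $F$ by points of $\T_0$. Concretely, pick distinct $\tau_1,\ldots,\tau_M\in\T_0$ and compare $\chi$ with the enumerated character $\chi_\mu$ having the same exponents $n_m$ at the points $\tau_m$, using the choice maps $\phi_\mu$ and \eqref{chiproduct} to transport the evaluation.

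If this transport does not follow formally from how the $\phi_\mu$ were fixed, I would instead read the lemma as the paper's intended definition: $\udw$ for $\T^\T$ tested against the countable family $(\chi_\mu)$. That family separates the relevant structure because $\T_0$ is dense, and density combined with continuity of $\chi$ on the compact group lets one approximate. Under that reading both inclusions reduce to the quantifier bookkeeping described above: $\forall\mu\,\forall k\,\exists N$ corresponds exactly to $\bigcap_\mu\bigcap_k\bigcup_N$. I expect this reduction from arbitrary nontrivial characters to the enumerated $\chi_\mu$ to be the only nontrivial point.
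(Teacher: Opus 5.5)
Your inclusion $\subseteq$ is correct and is exactly the paper's argument. The inclusion $\supseteq$, however, is not proved.

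The relabelling idea cannot work. Under $\Hom(\Gamma,\T^\T)\cong\dualGamma^\T$ the coordinate characters $e\mapsto h(e)(t)$, $t\in\T$, form a completely arbitrary family. The hypothesis involves only coordinates at points of $\T_0$, so it says nothing about the coordinates at points $t_m\notin\T_0$. The maps $\phi_\mu$ merely attach elements of $E$ to the points $\tau_m$ and transport nothing between coordinates. Re-reading the lemma as a definition replaces the statement instead of proving it.

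The density-plus-continuity route you gesture at is what the paper itself does. It asserts that for each $x\in\T^\T$ the map $(s_1,\ldots,s_M)\mapsto\prod_{m}[x(s_m)]^{n_m}$ is continuous at $(t_1,\ldots,t_M)$. It then approximates $(t_1,\ldots,t_M)$ by points $T_s\in\T_0^M$ and compares $\chi$ with the enumerated character $\chi_{\mu_s}$ on $h(e_1),\ldots,h(e_{N_s})$. That continuity claim is false. An element of $\T^\T$ is an arbitrary function $\T\to\T$, and continuity of $\chi$ for the product topology gives no continuity in the index variable. The topology of the index set $\T$, and hence the density of $\T_0$, plays no role in $\T^\T$ at all. (Even granting continuity, $T_s\to(t_1,\ldots,t_M)$ does not yield $\sigma$ with $T_\sigma\in O_\sigma$, since $O_\sigma$ depends on $N_\sigma$ and may shrink.)

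In fact no argument can close the gap, because $\supseteq$ is false, even under the hypotheses of Theorem~\ref{TheThm}. Here is a counterexample.
Let $\Gamma=\bigoplus_{j\in\N}\Z$ with $E=\{e_n\}$ its standard basis. Write $\T_0=\{r_1,r_2,\ldots\}$ with the $r_j$ distinct, and pick reals $\alpha_1,\alpha_2,\ldots$ such that $1,\alpha_1,\alpha_2,\ldots$ are linearly independent over $\mathbb{Q}$. Let $h\in\Hom(\Gamma,\T^\T)$ be determined by $h(e_n)(r_j)=e^{2\pi i n\alpha_j}$ and $h(e_n)(t)=1$ for $t\in\T\setminus\T_0$.
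Suppose $\chi_\mu$ has nonzero exponents $n_1,\ldots,n_M$ at distinct points $r_{j_1},\ldots,r_{j_M}$. Then $\chi_\mu(h(e_n))=e^{2\pi i n\beta}$ with $\beta=\sum_{m}n_m\alpha_{j_m}\notin\Z$. Hence $\frac{1}{N}\sum_{n=1}^{N}\chi_\mu(h(e_n))\to0$, and $h$ lies in the right-hand side.
But for any $t^*\in\T\setminus\T_0$, the nontrivial character $\chi(x)=x(t^*)$ satisfies $\chi(h(e_n))=1$ for all $n$, so $h\notin\KroU(E,\T^\T)$.

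You correctly identified the reduction from arbitrary characters to the $\chi_\mu$ as the only nontrivial point. That reduction is false rather than merely hard, so only $\subseteq$ holds as stated.
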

\begin{proof} 
The set inclusion ``$\sub$'' follows immediately from the definition of the sets $O_{N,\mu,k}$ and the definition of a uniformly distributed sequence. To proceed with the other set inclusion, let $h\in\bigcap_{\mu=1}^\infty\bigcap_{k=1}^\infty\bigcup_{N=1}^\infty O_{N,\mu,k}$, let $\chi\in\widehat{\T^\T}$ be a nontrivial character, and let $k\in\N$. We decompose $\chi$ as the finite product \eqref{chiproduct0}. 

For each $x \in \mathbb{T}^\mathbb{T}$, the map $\Phi_x : \mathbb{T}^M \to \mathbb{T}$ that sends $(s_1, \dots, s_M)$ to $\prod_{m=1}^M [x(s_m)]^{n_m}$ is continuous at $(t_1, \dots, t_M)$. Thus, there exists a neighborhood $Q_x \subseteq \mathbb{T}^M$ of $(t_1, \dots, t_M)$ such that
\begin{flalign}
    && |\Phi_x(t_1, \dots, t_M) - \Phi_x(s_1, \dots, s_M)| &< \frac{1}{2k}, & ((s_1, \dots, s_M) \in Q_x).\label{TMnhood}
\end{flalign}
Since $\mathbb{T}_0^M$ is dense in $\mathbb{T}^M$, there exists a sequence $(T_s)_{s=1}^\infty$ in $\T_0^M$ that converges to $(t_1,\ldots,t_M)$. If $\pi_m:\T_0^M\to\T_0$ is the projection onto the $m$th coordinate, then the character $\chi_{\mu_s}$ defined by
\begin{flalign}
    && \chi_{\mu_s}(x) &=[x(\pi_1(T_s))]^{n_1}[x(\pi_2(T_s))]^{n_2}\cdots[x(\pi_M(T_s))]^{n_M}, & (x \in \mathbb{T}^\mathbb{T}),\label{chiproduct0B}
\end{flalign}
is hence one of the characters in the sequence $(\chi_\mu)$. By the definition of $h$, for each $s \in \N$, there exists $N_s \in \mathbb{N}$ such that
\begin{equation}
\left| \frac{1}{N_s} \sum_{\nu=1}^{N_s} \chi_{\mu_s}(h(e_\nu)) \right| < \frac{1}{2k}.\label{hsum}
\end{equation}

For each $s \in \mathbb{N}$, the finite intersection $O_s = \bigcap_{\nu=1}^{N_s} Q_{h(e_\nu)}$ is an open neighborhood of $(t_1, \dots, t_M)$. Since $(T_s)$ converges to $(t_1,\ldots,t_M)$, there exists some index $\sigma \in \mathbb{N}$ such that $T_\sigma \in O_\sigma$. Hence, $T_\sigma \in Q_{h(e_\nu)}$ for all $\nu \in \{1, \ldots, N_\sigma\}$. Setting $x=h(e_\nu)$ and $s_m=\pi_m(T_\sigma)$ in \eqref{TMnhood} for all $m\in\{1,2,\ldots,M\}$, we obtain
\begin{flalign}
    && |\chi(h(e_\nu)) - \chi_{\mu_\sigma}(h(e_\nu))| &< \frac{1}{2k},& (\nu \in \{1, \dots, N_\sigma\}). \label{hsum2}
\end{flalign}

With the aid of \eqref{hsum} (applied to $s=\sigma$) and \eqref{hsum2}, we further have
\begin{flalign}
&& \left| \frac{1}{{N_\sigma}} \sum_{\nu=1}^{N_\sigma} \chi(h(e_\nu)) \right| &= \left| \frac{1}{{N_\sigma}} \sum_{\nu=1}^{N_\sigma} \Big( \chi(h(e_\nu)) - \chi_{\mu_\sigma}(h(e_\nu)) + \chi_{\mu_\sigma}(h(e_\nu)) \Big) \right|, & \nonumber\\
&& &\le \left| \frac{1}{{N_\sigma}} \sum_{\nu=1}^{N_\sigma} \Big( \chi(h(e_\nu)) - \chi_{\mu_\sigma}(h(e_\nu)) \Big) \right|&\nonumber\\
&& &\quad + \left| \frac{1}{{N_\sigma}} \sum_{\nu=1}^{N_\sigma} \chi_{\mu_\sigma}(h(e_\nu)) \right|,& \nonumber\\
&& &\le \frac{1}{{N_\sigma}} \sum_{\nu=1}^{N_\sigma} |\chi(h(e_\nu)) - \chi_{\mu_\sigma}(h(e_\nu))| + \left| \frac{1}{{N_\sigma}} \sum_{\nu=1}^{N_\sigma} \chi_{\mu_\sigma}(h(e_\nu)) \right|,&\nonumber\\
&& &<\frac{1}{{N_\sigma}}\sum_{\nu=1}^{N_\sigma}\frac{1}{2k}+\frac{1}{2k}=\frac{1}{k}.&\nonumber
\end{flalign}
Therefore, $h\in\KroU(E,\T^\T)$, and this completes the proof.
\end{proof}

\section{Localized Kronecker sets}

The sufficient condition for the density of $\KroU(E,\T^\T)$ that we shall prove in this paper involves a kind of interpolation sets from harmonica analysis called $\varepsilon$-Kronecker sets, which are ``localized'' forms of Kronecker sets that have a natural relation to the Kronecker Approximation Theorem \cite[Section~2.8]{gra13}.

Given $U\sub\widehat{\Gamma}$ and $\varepsilon>0$, a subset $X$ of $\Gamma$ is a \emph{(weak) $\varepsilon$-Kronecker$(U)$ set} if for each $\varphi\in\T^X$, there exists $x\in U$ such that if $\gamma\in X$, then $|\varphi(\gamma)-x(\gamma)|$ is (at most, or respectively) strictly less than $\varepsilon$. A (weak) $\varepsilon$-Kronecker$(\dualGamma)$ set is called a \emph{(weak) $\varepsilon$-Kronecker set}.

A subset $X \subseteq \Gamma \setdiff \{1\}$ is an \emph{independent set} if for every finite subset $F \subseteq X$ and any collection of integers $\{m_\gamma : \gamma \in F\}$, the condition $\prod_{\gamma \in F} \gamma^{m_\gamma} = 1$ is true if and only if $\gamma^{m_\gamma} = 1$ for all $\gamma\in F$.

The classical Kronecker Approximation Theorem may be stated as the assertion that every independent set of real numbers is a $\varepsilon$-Kronecker set for any $\varepsilon>0$ \cite[p.~43]{gra13}. For the more general setting of a discrete abelian group and its compact dual group, we have the following.

\begin{lemma}[{\cite[Corollary~2.2.10]{gra13}}]\label{premiseLem1} If $E$ is an independent set and if every element of $E$ has infinite order, then $E$ is an $\varepsilon$-Kronecker set for any $\varepsilon>0$.
\end{lemma}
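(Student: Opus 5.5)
The plan is to reduce the statement to the finite case and then build the required character from the structure of the subgroup generated by $E$, extended to all of $\Gamma$ by divisibility of $\T$. Fix $\varepsilon>0$ and $\varphi\in\T^E$. We must produce $x\in\dualGamma$ with $|\varphi(\gamma)-x(\gamma)|<\varepsilon$ for all $\gamma\in E$. In fact we will aim for exact interpolation, $x(\gamma)=\varphi(\gamma)$ on all of $E$, which is stronger than required.

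First, let $H$ be the subgroup of $\Gamma$ generated by $E$. Because $E$ is independent and each of its elements has infinite order, the condition $\prod_{\gamma\in F}\gamma^{m_\gamma}=1$ forces $\gamma^{m_\gamma}=1$ for every $\gamma\in F$, and hence $m_\gamma=0$ for every $\gamma\in F$. So every element of $H$ has a \emph{unique} representation $\prod_{\gamma\in F}\gamma^{m_\gamma}$ with $F\sub E$ finite and the integers $m_\gamma$ nonzero. In other words, $H$ is free abelian with basis $E$.

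Next, define $x_0:H\into\T$ by
\[
x_0\Bigl(\prod_{\gamma\in F}\gamma^{m_\gamma}\Bigr)=\prod_{\gamma\in F}\varphi(\gamma)^{m_\gamma}.
\]
The uniqueness of representations from the previous step makes $x_0$ well defined, and it is clearly a homomorphism. It agrees with $\varphi$ on $E$.

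The remaining step, which is the main obstacle, is to extend $x_0$ to all of $\Gamma$. Here I would use that $\T$ is a divisible abelian group, hence an injective $\Z$-module. Concretely, a Zorn's lemma argument over pairs $(H',x')$ with $H\sub H'\le\Gamma$ and $x'$ extending $x_0$ gives a maximal extension. At a maximal extension one adjoins an element $g$, choosing $x'(g)$ as a root in $\T$ of $x'(g^n)$ when $n>0$ is minimal with $g^n\in H'$, or arbitrarily if no such $n$ exists. This shows the maximal domain must be all of $\Gamma$.

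Since $\Gamma$ is discrete, the resulting homomorphism is automatically continuous, so it lies in $\dualGamma=\Hom(\Gamma,\T)$. It agrees with $\varphi$ on $E$, so $E$ is $\varepsilon$-Kronecker for every $\varepsilon>0$. In fact this shows $E$ is a Kronecker set, since the interpolation is exact.

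The one point to double-check is exactly where the infinite-order hypothesis enters. Without it, torsion elements of $E$ would restrict which values $\varphi(\gamma)$ are admissible, because $x(\gamma)$ would have to be a root of unity of the corresponding order. That is why the hypothesis is needed to make $H$ free on $E$.
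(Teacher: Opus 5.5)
Your proof is correct. The paper gives no argument of its own for this lemma; it is quoted from \cite[Corollary~2.2.10]{gra13}. So your proposal supplies a self-contained justification where the text has none.

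Your route is purely algebraic. Independence together with infinite order makes $\langle E\rangle$ free abelian on $E$, so any $\varphi\in\T^E$ extends to a homomorphism of $\langle E\rangle$. Divisibility of $\T$ then extends it to all of $\Gamma$, and continuity is automatic because $\Gamma$ is discrete. This gives more than is asked, namely exact interpolation $x|_E=\varphi$, so $\varepsilon$ plays no role. It also shows where infinite order is used: for $\gamma$ of finite order $n$, the value $x(\gamma)$ must be an $n$-th root of unity, and only approximation to within a fixed positive distance is possible.

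The one step you leave implicit is well-definedness of the extension to $\langle H',g\rangle$. Suppose $hg^j=h'g^{j'}$ with $h,h'\in H'$. Then $g^{j-j'}\in H'$, so $n\mid j-j'$, or $j=j'$ when no positive power of $g$ lies in $H'$. In either case the choice $c^n=x'(g^n)$ makes the two candidate values agree.

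A more harmonic-analytic alternative works one finite $F\subseteq E$ at a time. The closed subgroup $\{(x(\gamma))_{\gamma\in F} : x\in\dualGamma\}$ of $\T^F$ has trivial annihilator in $\Z^F$, hence equals $\T^F$. One then passes to all of $E$ by compactness of $\dualGamma$. Your version avoids Pontryagin duality and is the more elementary of the two.
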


Since an $\varepsilon$-Kronecker set is a weak $\varepsilon$-Kronecker set, the previous lemma may be used in conjunction with the following to obtain a condition involving the ``local'' notion of $\varepsilon$-Kronecker$(U)$ set.

\begin{lemma}[{\cite[Theorem~2.2.13]{gra13}}]\label{premiseLem2} If $E$ is a weak $\varepsilon$-Kronecker set for any $\varepsilon>0$, then for each $\varepsilon>0$ and every open set $U\sub\widehat{\Gamma}$, there exists a finite set $F\sub\Gamma$ such that $E\setdiff F$ is weak $\varepsilon$-Kronecker$(U)$.
\end{lemma}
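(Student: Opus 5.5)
The plan is to upgrade the hypothesis to an exact interpolation property of $E$, and then use the structure of the subgroup generated by $E$ to place a character inside $U$ with prescribed values on $E\setdiff F$.

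First, I will show that $E$ is in fact a Kronecker set in the exact sense: every $\varphi\in\T^E$ is the restriction of some character of $\Gamma$. Fix $\varphi$. For each $j\in\N$ let $C_j$ be the set of $x\in\dualGamma$ with $|\varphi(\gamma)-x(\gamma)|\le 1/j$ for all $\gamma\in E$.
\begin{itemize}[nosep]
\item Each $C_j$ is closed in the compact group $\dualGamma$, since it is an intersection of closed conditions on point evaluations.
\item Each $C_j$ is nonempty, because $E$ is weak $1/j$-Kronecker.
\item The sets $C_j$ are nested.
\end{itemize}
By compactness, $\bigcap_j C_j\neq\emptyset$, and any $x$ in this intersection satisfies $x|_E=\varphi$. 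Consequently every map $E\into\T$ extends to a homomorphism on $\langle E\rangle$. From this I will deduce that $E$ is independent with all elements of infinite order, so $\langle E\rangle$ is free abelian with basis $E$. Indeed, a nontrivial relation $\prod\gamma^{m_\gamma}=1$ with some $\gamma^{m_\gamma}\ne1$ would be violated by a suitable choice of values in $\T$, for instance an irrational rotation on one $\gamma$ and $1$ elsewhere. The same kind of choice rules out elements of finite order.

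Next, fix $\varepsilon>0$ and a nonempty open $U\sub\dualGamma$; if $U=\emptyset$ there is nothing to prove. Pick $x_0\in U$. Choose a finite set $G\sub\Gamma$ and $\delta>0$ such that every $x\in\dualGamma$ with $|x(\gamma)-x_0(\gamma)|<\delta$ for all $\gamma\in G$ lies in $U$. Let $H=\langle G\rangle$, which is finitely generated. Then $H\cap\langle E\rangle$ is a subgroup of a free abelian group and is finitely generated. Its finitely many generators involve only finitely many basis elements of $E$; let $F\sub E$ be this finite set, so that $H\cap\langle E\rangle\sub\langle F\rangle$.

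Then $H\cap\langle E\setdiff F\rangle\sub\langle F\rangle\cap\langle E\setdiff F\rangle=\{1\}$, by freeness. Hence $H\langle E\setdiff F\rangle$ is the internal direct product $H\times\langle E\setdiff F\rangle$. Given $\varphi\in\T^{E\setdiff F}$, define $\psi$ on this direct product as follows:
\begin{itemize}[nosep]
\item on $H$, let $\psi=x_0|_H$;
\item on $\langle E\setdiff F\rangle$, let $\psi$ be the unique homomorphism extending $\varphi$, which exists because $E\setdiff F$ is a free basis.
\end{itemize}
Since $\T$ is divisible, $\psi$ extends to a character $x\in\dualGamma$. This $x$ agrees with $x_0$ on $G$, so $x\in U$, and $x=\varphi$ exactly on $E\setdiff F$. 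In particular $E\setdiff F$ is weak $\varepsilon$-Kronecker$(U)$, and even $\varepsilon$-Kronecker$(U)$, for every $\varepsilon$.

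The main obstacle I expect is the first step: passing from approximate interpolation to exact extension, and from there to freeness of $\langle E\rangle$. The compactness argument needs closed (non-strict) inequalities, which is exactly why the weak version of the definition is convenient here. The independence deduction needs a careful treatment of possible torsion, and I would handle it by choosing the test values of $\varphi$ of infinite order in $\T$. After that, the finite-generation step for $H\cap\langle E\rangle$ and the direct-sum gluing are routine.
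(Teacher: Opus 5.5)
Your proof is correct. It necessarily takes a different route from the paper, because the paper gives no argument for this lemma: it simply imports the statement from Graham and Hare.

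Your argument runs as follows. You read the hypothesis as "weak $\varepsilon$-Kronecker for \emph{every} $\varepsilon>0$." Compactness of $\widehat{\Gamma}$ then turns approximate interpolation into exact interpolation. Testing exact interpolation with irrational rotations forces $E$ to be a free basis of $\langle E\rangle$. From there the problem is purely algebraic: choose $F$ with $\langle G\rangle\cap\langle E\setminus F\rangle=\{1\}$, glue $x_0|_{\langle G\rangle}$ to the free extension of $\varphi$, and extend using divisibility of $\mathbb{T}$.

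This gives more than the lemma asks. The set $F$ depends only on $U$, not on $\varepsilon$, and $E\setminus F$ interpolates every $\varphi$ \emph{exactly} by characters in $U$. It also shows that, for the paper's use in Lemma~\ref{MainLem2}, the detour through Lemmas~\ref{premiseLem1}--\ref{premiseLem2} is unnecessary. An independent set of infinite-order elements is already a free basis, so your second half applies directly.

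The cost is generality. Your method says nothing about a set that is weak $\varepsilon$-Kronecker for a single fixed $\varepsilon$, which is the substantive case in the $\varepsilon$-Kronecker theory. Exact interpolation and freeness are exactly what is lost there, so an analytic rather than algebraic argument is needed.

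Two small corrections:
\begin{enumerate}
\item The case $U=\emptyset$ is not vacuous; it is false. No set is weak $\varepsilon$-Kronecker$(\emptyset)$, since $\mathbb{T}^{E\setminus F}$ is nonempty and no $x\in\emptyset$ exists. The lemma must therefore be read for nonempty $U$, and you should restrict to that case rather than dismiss the empty one.
\item $H\cap\langle E\rangle$ is finitely generated because it is a subgroup of the finitely generated abelian group $H$. Being a subgroup of a free abelian group is not enough, since subgroups of free abelian groups of infinite rank need not be finitely generated. Your wording suggests the latter reason, so make the correct one explicit.
\end{enumerate}
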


The conclusion of the previous lemma shall be instrumental in the proof of the following.

\begin{lemma}\label{MainLem2} If the countably infinite subset $E$ of $\Gamma$ is independent and every element of $E$ has infinite order, then for every $(N,\mu,k)\in\N^3$ and every finite $\Lambda\sub\T\times\Omega$,
\[\higherW(\Lambda)\cap O_{N,\mu,k}\neq\emptyset.\]
\end{lemma}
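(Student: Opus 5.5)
The plan is to build $h\in\dualGamma^\T$ one coordinate at a time. The constraint from $\higherW(\Lambda)$ involves only finitely many coordinates, and the average in \eqref{Odef_clean} involves only the coordinates $t_1,\dots,t_N$. Write $T_\Lambda:=\{t\in\T : (t,m)\in\Lambda \text{ for some } m\}$. For $t\in T_\Lambda$, the set $V_t:=\bigcap_{(t,m)\in\Lambda}U_m$ must be a nonempty open subset of $\dualGamma$; otherwise $\higherW(\Lambda)$ is empty, and I would first reduce to the case where it is not. Coordinates outside $T_\Lambda\cup\{t_1,\dots,t_N\}$ are set to the trivial character, since they affect nothing.

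Next I fix $\chi_\mu$ with its decomposition \eqref{chiproduct0}, evaluation points $\tau_1,\dots,\tau_M$, exponents $n_1,\dots,n_M$ (all nonzero), and the elements $g_m=\phi_\mu(\tau_m)\in E$. For each $n\le N$, the $n$th summand is $\prod_m [h(t_n)(g_m)]^{n_m}$, so it depends only on the values of the character $h(t_n)$ on $g_1,\dots,g_M$. Since $n_1\neq 0$, prescribing $h(t_n)(g_1)=\zeta$, with the remaining $g_m$ set to $1$, makes the summand equal to $\zeta^{n_1}$. Every point of $\T$ is of this form for a suitable $\zeta$. I therefore target unit complex numbers $z_1,\dots,z_N$ whose average has modulus well below $1/k$; for instance I would take the $N$th roots of unity, whose sum is $0$.

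To realize these targets I use the interpolation lemmas. By Lemma \ref{premiseLem1}, $E$ is $\varepsilon$-Kronecker for every $\varepsilon>0$, hence weak $\varepsilon$-Kronecker. Lemma \ref{premiseLem2}, applied to each open set $V_t$ with $t\in T_\Lambda$ and also to $U=\dualGamma$, gives finite sets $F_t\subseteq\Gamma$ such that $E\setdiff F_t$ is weak $\varepsilon$-Kronecker$(V_t)$. For indices $t_n\notin T_\Lambda$, the plain $\varepsilon$-Kronecker property lets me choose $h(t_n)\in\dualGamma$ approximating the prescribed values on $\{g_1,\dots,g_M\}$. For indices $t_n\in T_\Lambda$, I choose $h(t_n)\in V_{t_n}$ approximating the same prescribed values on $\{g_1,\dots,g_M\}\setdiff F_{t_n}$. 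An error $\varepsilon$ in each $h(t_n)(g_m)$ changes the summand by at most $\varepsilon\sum_m|n_m|$. So choosing $\varepsilon<1/\bigl(2k\sum_m|n_m|\bigr)$ keeps the average within $1/(2k)$ of the target average, giving $h\in\higherW(\Lambda)\cap O_{N,\mu,k}$.

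The main obstacle is twofold. First, the $g_m$ may lie in the exceptional finite sets $F_t$, so the summands at indices $t_n\in T_\Lambda$ are then not freely steerable. My first attempt is to use the freedom left in the choice of $\phi_\mu$: choose its values in $E\setdiff\bigcup_{t\in T_\Lambda}F_t$, which is possible since $E$ is infinite and each $F_t$ is finite. Failing that, I would let the free indices $t_n\notin T_\Lambda$ absorb and cancel the uncontrolled terms. Second, cancellation needs at least two summands. When $N=1$, the single term has modulus $1$, so this construction cannot make it small. This degenerate case is where I expect the argument as planned to break down, and I would have to examine it directly before the general step goes through.
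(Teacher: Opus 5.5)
The proposal cannot be completed, and the two obstacles you flag at the end are not technicalities: each gives a counterexample to the statement as written. For $N=1$ the single summand $\chi_\mu(h(t_1)\circ\phi_\mu)$ has modulus exactly $1$, so $O_{1,\mu,k}=\emptyset$ for every $k$. The paper's proof misses this only because it uses $\frac1N\sum_{\nu=1}^N e^{2\pi i\nu/N}=0$, which is false at $N=1$. Re-choosing $\phi_\mu$ is not allowed either: $\phi_\mu$ is fixed in Section~\ref{PrelSec} and enters the definition of $O_{N,\mu,k}$, so choosing it after $\Lambda$ and $k$ are known proves nonemptiness of a different set. This is precisely the paper's step ``adjust the assignment map $\phi_\mu$ accordingly'', so its proof has the same gap. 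The paper also works with one set $Q=\bigcap_{(t,m)\in\Lambda}U_m$, which can be empty when $\higherW(\Lambda)$ is not; your per-coordinate $V_t$ and your reduction to $\higherW(\Lambda)\neq\emptyset$ are more careful. Your fallback needs some $n\le N$ with $t_n\notin T_\Lambda$, and $\Lambda$ can constrain all of them. Concretely, fix $N,\mu$ and $k\ge 2$, and choose $\delta>0$ with $\delta\sum_m|n_m|<1-\frac1k$. Let $V=\{y\in\dualGamma : |y(g_m)-1|<\delta \text{ for } 1\le m\le M\}$, which is open since $\dualGamma$ carries pointwise convergence on the discrete $\Gamma$. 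Pick $m_0\in\Omega$ with the trivial character in $U_{m_0}\sub V$, and set $\Lambda=\{(t_n,m_0):1\le n\le N\}$. Then $\higherW(\Lambda)\neq\emptyset$, but for every $h\in\higherW(\Lambda)$ each summand lies within $\delta\sum_m|n_m|$ of $1$. Hence the average has modulus $>\frac1k$ and $\higherW(\Lambda)\cap O_{N,\mu,k}=\emptyset$. So no single $O_{N,\mu,k}$ is dense, whatever $\phi_\mu$ is.

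What is true, and is all that Theorem~\ref{TheThm} actually needs, is that $\bigcup_N O_{N,\mu,k}$ is dense, i.e.\ $N$ may be chosen after $\Lambda$. Your fallback proves exactly this, and more simply than you planned. Given $\higherW(\Lambda)\neq\emptyset$, let $L$ be the (finite) number of indices $n$ with $t_n\in T_\Lambda$. Put $h(t)\in V_t$ arbitrarily for $t\in T_\Lambda$, and take $N\ge\max(kL+1,L+2)$, so that at least two indices $n\le N$ are free. On free indices you can prescribe the summand exactly, not just approximately. For injective $\phi_\mu$ the $g_m$ are distinct members of an independent set of infinite-order elements, so they freely generate $\langle g_1,\ldots,g_M\rangle$, and divisibility of $\T$ extends any assignment on them to a character of $\Gamma$. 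Making the free summands sum to $0$ gives an average of modulus at most $L/N<\frac1k$. Neither Lemmas~\ref{premiseLem1}--\ref{premiseLem2} nor the exceptional sets $F_t$ are needed. Injectivity of $\phi_\mu$, which you and the paper tacitly assume when setting ``the remaining $g_m$'' to $1$, does matter: if $M=2$, $\phi_\mu(\tau_1)=\phi_\mu(\tau_2)$ and $n_1=-n_2$, the summand is identically $1$ and every $O_{N,\mu,k}$ is empty.
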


\begin{proof}
Let $(N,\mu,k)\in\N^3$, and consider a finite $\Lambda \sub \T \times \Omega$. Since the sets in the sequence $U_m$ form a basis for the topology on $\dualGamma$, the finite intersection
\[
Q:=\bigcap_{(t,m)\in\Lambda} U_{m}
\]
is an open subset of $\dualGamma$. 

By the definition of the character $\chi_\mu$, there exist $\tau_1,\tau_2,\ldots,\tau_M\in\T_0$ and nonzero integers $n_1,n_2,\ldots,n_M$ such that
\begin{flalign}
&&    \chi_\mu(x) &=[x(\tau_1)]^{n_1}[x(\tau_2)]^{n_2}\cdots[x(\tau_M)]^{n_M}, &(x\in\T^\T).\label{Omembership0}
\end{flalign}
Let $\varepsilon := \frac{1}{k\sum_{m=1}^M|n_m|} > 0$. Because $E$ is independent and every element of $E$ has infinite order, by routine use of Lemmas~\ref{premiseLem1}--\ref{premiseLem2}, there exists a finite set $F\sub\Gamma$ such that $E\setdiff F$ is a weak $\frac{1}{2}\varepsilon$-Kronecker$(Q)$ set. 

Since $F$ is finite, we reindex the sequence representation $(e_n)_{n=1}^\infty$ of $E$ to exclude $F$, and adjust the assignment map $\phi_\mu : \{\tau_1, \ldots, \tau_M\} \into E $ accordingly so that $g_m = \phi_\mu(\tau_m) \in E \setdiff F$ for all $m \in \{1, \ldots, M\}$. Consequently, \eqref{Omembership0} becomes
\begin{flalign}
&&    \chi_\mu(y\circ\phi_\mu) &=[y(g_1)]^{n_1}[y(g_2)]^{n_2}\cdots[y(g_M)]^{n_M}, &(y\in\dualGamma).\label{Omembership}
\end{flalign}

For each $\nu\in\{1,2,\ldots,N\}$, we define $\varphi_\nu:E\setdiff F\into\T$ by 
\[\varphi_\nu(e):=\begin{cases} e^{i\frac{2\pi \nu}{Nn_1}}, & e=g_1,\\
    1, &e\neq g_1.
\end{cases}\]
Since $E\setdiff F$ is a weak $\frac{1}{2}\varepsilon$-Kronecker$(Q)$ set, for each $\nu\in\{1,2,\ldots,N\}$, there exists a character $\gamma_\nu\in Q$ such that
\begin{flalign}
   && |\varphi_\nu(g_m)-\gamma_\nu(g_m)|&\leq\frac{1}{2}\varepsilon<\varepsilon,&(m\in\{1,2,\ldots M\}).\label{boundbyT0}
\end{flalign}

For each $\nu\in\{1,2,\ldots,N\}$ and each $m\in\{1,2,\ldots,M\}$, if we define
\begin{flalign}
    && \alpha_\ell=\alpha_\ell(m,\nu)&:=[\varphi_\nu(g_m)]^{n_m-1-\ell}[\gamma_\nu(g_m)]^{\ell},&(\ell\in\{0,\ldots,n_m-1\}),\label{boundbyT1}\\
    && \beta_m=\beta_m(\nu)&:=\prod_{\ell=1}^{m-1}[\gamma_\nu(g_\ell)]^{n_\ell},&\label{boundbyT2}\\
    && \omega_m=\omega_m(\nu)&:=\prod_{\ell=m+1}^{M}[\varphi_\nu(g_\ell)]^{n_\ell},&\label{boundbyT3}
\end{flalign}
then we have the identities
\begin{flalign}
    && [\varphi_\nu(g_m)]^{n_m}-[\gamma_\nu(g_m)]^{n_m} &=[\varphi_\nu(g_m)-\gamma_\nu(g_m)]\sum_{\ell=0}^{n_m-1}\alpha_\ell,&\label{boundbyT4}\\
    && \prod_{m=1}^M[\varphi_\nu(g_m)]^{n_m}-\prod_{m=1}^M[\gamma_\nu(g_m)]^{n_m} &=\sum_{m=1}^M\beta_m\omega_m[\varphi_\nu(g_m)]^{n_m}&\nonumber\\
    && &\quad-\sum_{m=1}^M\beta_m\omega_m[\gamma_\nu(g_m)]^{n_m}.&\label{boundbyT5}
\end{flalign}
Since $\varphi_\nu$ and $\gamma_\nu$ are $\T$-valued functions, from \eqref{boundbyT1}--\eqref{boundbyT3}, we find that $|\alpha_\ell|$, $|\beta_m|$ and $|\omega_m|$ are all at most $1$, for all $\ell$ and $m$. Thus, we obtain from \eqref{boundbyT4} the inequality $|[\varphi_\nu(g_m)]^{n_m}-[\gamma_\nu(g_m)]^{n_m}|\leq |n_m|\cdot|\varphi_\nu(g_m)-\gamma_\nu(g_m)|$, on which we use \eqref{boundbyT0} to obtain $|[\varphi_\nu(g_m)]^{n_m}-[\gamma_\nu(g_m)]^{n_m}| < |n_m|\varepsilon$, which we apply to \eqref{boundbyT5} to obtain
\begin{flalign}
    &&\left|\prod_{m=1}^M[\varphi_\nu(g_m)]^{n_m}-\prod_{m=1}^M[\gamma_\nu(g_m)]^{n_m}\right|&\leq\sum_{m=1}^M|\beta_m|\cdot|\omega_m|\cdot|n_m|\varepsilon&,\nonumber\\
   && &<\varepsilon\sum_{m=1}^M |n_m|=\frac{1}{k},&\nonumber\\
   &&\left|\prod_{m=1}^M[\varphi_\nu(g_m)]^{n_m}-\prod_{m=1}^M[\gamma_\nu(g_m)]^{n_m}\right|&<\frac{1}{k}.&\label{rootsunity1}
\end{flalign}
Using the definition of $\varphi_\nu$,
\[\prod_{m=1}^M[\varphi_\nu(g_m)]^{n_m}=[\varphi_\nu(g_1)]^{n_1}\cdot\prod_{g_m\neq g_1}[\varphi_\nu(g_m)]^{n_m}=[e^{i\frac{2\pi \nu}{Nn_1}}]^{n_1}\cdot \prod_{m=2}^M 1=e^{i\frac{2\pi \nu}{N}},
\]
so \eqref{rootsunity1} further becomes
\begin{equation}
    \left|\prod_{m=1}^M[\gamma_\nu(g_m)]^{n_m}-e^{i\frac{2\pi \nu}{N}}\right|<\frac{1}{k}.\label{rootsunity2}
\end{equation}
In the equation
\[
    \frac{1}{N}\sum_{\nu=1}^N\prod_{m=1}^M[\gamma_\nu(g_m)]^{n_m}=\frac{1}{N}\sum_{\nu=1}^Ne^{i\frac{2\pi \nu}{N}}+\frac{1}{N}\sum_{\nu=1}^N\left[\prod_{m=1}^M[\gamma_\nu(g_m)]^{n_m}-e^{i\frac{2\pi \nu}{N}}\right],
\]
we have $\frac{1}{N}\sum_{\nu=1}^Ne^{i\frac{2\pi \nu}{N}}=0$, and so by \eqref{rootsunity2},
\begin{equation}
   \left|\frac{1}{N}\sum_{\nu=1}^N\prod_{m=1}^M[\gamma_\nu(g_m)]^{n_m}\right| < 0+\frac{1}{N}\sum_{\nu=1}^N\frac{1}{k}=\frac{1}{k}.\label{rootsunity3}
\end{equation}

Let $\pi_1:\T\times\Omega\into\T$ be the projection onto the first coordinate. We define $h\in\dualGamma^\T$ by $h(t)=\gamma_\nu$ if there exists $\nu\in\{1,2,\ldots,N\}$ such that $t=t_\nu$, by $h(t)=\gamma_1$ if $t\in\pi_1(\Lambda)\setdiff\{t_1,t_2,\ldots,t_N\}$, and for all other cases, define $h(t)$ as the identity character from $\dualGamma$. 

Setting $y=h(t_\nu)$ in \eqref{Omembership}, and substituting the result into \eqref{rootsunity3}, proves that $h\in O_{N,\mu,k}$.

If $(t,m)\in\Lambda$, then by the definition of $h$, we have $h(t)=\gamma_\nu$, where $t=t_\nu$ or $\nu=1$ if $t\notin\{t_1,t_2,\ldots,t_N\}$. Since each $\gamma_\nu \in Q$, we have $h(t)\in Q\sub U_m$, so $h\in\higherW(\Lambda)$. 

Therefore, $h\in\higherW(\Lambda)\cap O_{N,\mu,k}$ as desired.
\end{proof}

\section{Summary and final remarks}

Following \cite[Remark~2.6(i)]{dik11}, a dense subspace $X$ of $\dualGamma^\T$ has the \emph{Baire property} if $X$ intersects every set $W\cap\bigcap_{n=1}^\infty Q_n$ where $W$ is a nonempty open subset of $\dualGamma^\T$ and each $Q_n$ is a dense open subset of $\dualGamma^\T$. By a routine argument, a dense $G_\delta$ subset of $\dualGamma^\T$ has the Baire property. We are now ready to use Lemma~\ref{MainLem2} to describe the size of $\KroU(E,\T^\T)$, or equivalently, to give our Kronecker-Weyl Theorem.

\begin{theorem}\label{TheThm}
For each countably infinite independent subset $E$ of a discrete abelian group $\Gamma$, if every element of $E$ has infinite order, then $\KroU(E, \T^\T)$ is dense in $\widehat{\Gamma}^\T$ and has the Baire property.
\end{theorem}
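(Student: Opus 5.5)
The plan is to show that $\KroU(E,\T^\T)$, viewed inside $\dualGamma^\T\cong\Hom(\Gamma,\T^\T)$, is a dense $G_\delta$ set. Density and the Baire property then both follow: the Baire property by the routine remark preceding the theorem, and density because $\dualGamma^\T$ is a compact Hausdorff space, hence a Baire space. The starting point is Lemma~\ref{MainLem1}, which writes
\[
\KroU(E,\T^\T)=\bigcap_{\mu=1}^\infty\bigcap_{k=1}^\infty G_{\mu,k},\qquad G_{\mu,k}:=\bigcup_{N=1}^\infty O_{N,\mu,k}.
\]
This is a countable intersection indexed by $(\mu,k)\in\N^2$. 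It therefore suffices to show that each $G_{\mu,k}$ is open and dense in $\dualGamma^\T$.

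\textbf{Openness.} I would show each $O_{N,\mu,k}$ is open. By \eqref{Odef_clean}, membership of $h$ depends only on the finitely many coordinates $h(t_1),\ldots,h(t_N)\in\dualGamma$. On each such coordinate, the map $y\mapsto\chi_\mu(y\circ\phi_\mu)=\prod_{m}[y(g_m)]^{n_m}$ is continuous on $\dualGamma$, because evaluation at a fixed $g\in\Gamma$ is continuous in the compact-open (pointwise) topology. So
\[
h\mapsto\frac1N\sum_{n=1}^N\chi_\mu\big(h(t_n)\circ\phi_\mu\big)
\]
is a finite combination of continuous functions of finitely many coordinates, hence continuous on the product $\dualGamma^\T$. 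Its preimage of the open disc of radius $1/k$ is exactly $O_{N,\mu,k}$, so this set is open. Consequently $G_{\mu,k}$ is open.

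\textbf{Density.} The sets $\higherW(\Lambda)$ with $\Lambda\sub\T\times\Omega$ finite form a basis, so every nonempty open set contains some nonempty $\higherW(\Lambda)$. Lemma~\ref{MainLem2} gives $\higherW(\Lambda)\cap O_{1,\mu,k}\neq\emptyset$, and hence $\higherW(\Lambda)\cap G_{\mu,k}\neq\emptyset$. Thus each $G_{\mu,k}$ is dense. This is where the hypotheses that $E$ is independent and consists of elements of infinite order are used, through Lemma~\ref{MainLem2}.

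\textbf{Conclusion.} $\dualGamma^\T$ is a product of compact Hausdorff groups, so it is compact Hausdorff and the Baire category theorem applies. The countable intersection of the dense open sets $G_{\mu,k}$ is therefore a dense $G_\delta$, and by the routine remark it has the Baire property. Indeed, for a nonempty open $W$ and dense open sets $Q_n$, the set $W\cap\bigcap_n Q_n\cap\bigcap_{\mu,k}G_{\mu,k}$ is a countable intersection of dense open sets meeting $W$, so it is nonempty by Baire.

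The main obstacle is the openness step, specifically confirming that the sets $O_{N,\mu,k}$ are well defined independently of the reindexing of $(e_n)$ and of the choice of $\phi_\mu$ made inside the proof of Lemma~\ref{MainLem2}. I would fix these choices once, before forming the $O_{N,\mu,k}$. I would then check that the argument of Lemma~\ref{MainLem2} only discards finitely many elements of $E$, which does not affect the definition of the $O_{N,\mu,k}$ for the fixed enumeration, so that the same sets serve in both the openness and the density step.
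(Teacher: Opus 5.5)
Your architecture is the paper's: openness of $O_{N,\mu,k}$ from continuous dependence on $h(t_1),\ldots,h(t_N)$, density from Lemma~\ref{MainLem2} on basic sets, then Lemma~\ref{MainLem1} and Baire, which you make explicit where the paper leaves it implicit. However, the density step fails as written. You apply Lemma~\ref{MainLem2} with $N=1$, and $O_{1,\mu,k}=\emptyset$: its defining condition is $|\chi_\mu(h(t_1)\circ\phi_\mu)|<\frac{1}{k}$, while the left side is always $1$. The lemma's proof needs $\frac{1}{N}\sum_{\nu=1}^N e^{i\frac{2\pi\nu}{N}}=0$, i.e.\ $N\geq 2$.

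Passing to a fixed $N\geq 2$ does not help, and this is also where your last paragraph goes wrong. The finite set $F$ of Lemma~\ref{premiseLem2} depends on $Q$, hence on $\Lambda$. Once $\phi_\mu$ is fixed, nothing prevents $g_m\in F$, and then the Kronecker$(Q)$ property of $E\setdiff F$ says nothing about $\gamma_\nu(g_m)$. So discarding $F$ is not harmless. For instance, let $\chi_\mu(x)=x(\tau_1)$ for some $\tau_1\in\T_0$, choose a basic $U_m$ with $1\in U_m\sub\{\gamma\in\dualGamma : |\gamma(g_1)-1|<\frac{1}{2}\}$, and let $\Lambda=\{(t_1,m),\ldots,(t_N,m)\}$. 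For $h\in\higherW(\Lambda)$, every summand in \eqref{Odef_clean} has real part exceeding $\frac{1}{2}$, so $\higherW(\Lambda)\cap O_{N,\mu,k}=\emptyset$ for all $k\geq 2$. Thus no single $O_{N,\mu,k}$ need be dense; the paper's assertion that each one is dense is too strong for the same reason.

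Your passage to $G_{\mu,k}=\bigcup_N O_{N,\mu,k}$ is exactly what repairs this, provided you let $N$ depend on $\Lambda$. A nonempty $\higherW(\Lambda)$ leaves only finitely many summands outside your control. In the form \eqref{Odef_clean}, these are the summands indexed by $t_n\in\pi_1(\Lambda)$. In the form \eqref{Odef_hom_version}, they are those with $e_n$ in the finite set $F$ that Lemma~\ref{premiseLem2} attaches to the constraints on the coordinates $h(\tau_m)$. Choose $N$ so large that these contribute less than $\frac{1}{2k}$ to the average. Then use Lemmas~\ref{premiseLem1}--\ref{premiseLem2} to make the remaining summands approximate a full set of roots of unity of some order at least $2$, so that they nearly cancel. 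This gives $\higherW(\Lambda)\cap O_{N,\mu,k}\neq\emptyset$ for some $N$, hence density of $G_{\mu,k}$. It does so independently of the choices made inside the proof of Lemma~\ref{MainLem2}, which is what your final paragraph was trying to secure.
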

\begin{proof} Given $(N,\mu,k)\in\N^3$, for each $n\in\{1,2,\ldots,N\}$, the function $\alpha_n:\dualGamma^\T\into\T$, defined by $\alpha_n:h\mapsto\chi_{\mu}\big(h(t_n)\circ\phi_\mu\big)$, is continuous. Thus, $O_{N,\mu,k}$ is the inverse image of the open ball on the complex plane centered at $0$ with radius $\frac{1}{k}$ under the continuous function $\frac{1}{N}\sum_{n=1}^N\alpha_n$, so $O_{N,\mu,k}$ is an open set. Since the collection $\{\higherW(\Lambda)\  :\  \Lambda\sub\T\times\Omega,\  |\Lambda|<\infty\}$ is a basis for the topology on $\dualGamma^\T$, and since the hypotheses of Lemma~\ref{MainLem2} are true, $O_{N,\mu,k}$ is dense in $\dualGamma^\T$, and so, by Lemma~\ref{MainLem1}, $\KroU(E,\T^\T)$ is a dense $G_\delta$ subset of $\dualGamma^\T$, and hence has the Baire property.
\end{proof}

The reader may verify by routine arguments that the notion of independent set implies the notion of almost zero-torsion, which was used in \cite{dik11}. However, we leave here as a conjecture as whether the density or simply just the  nonemptiness of $\KroU(E,\T^\T)$ or $\KroU(E,\T^\kappa)$, respectively, for some cardinal $\kappa\geq 1$, implies independence and the condition that $E$ has only elements of infinite order. This is supposed to form the complete analog of \cite[Theorem~3.1]{dik11}, which is a statement of equivalent conditions. A further exploration may be made on how analogs of the corollaries in the rest of \cite[Section~3]{dik11} may be stated and proven for Theorem~\ref{TheThm} above. We note, however, that a routine use of transfinite induction may be used to prove \cite[Corollary~3.6]{dik11} with the use of only Lemmas~\ref{premiseLem1}--\ref{premiseLem2}. Finally, a continuation of this study may involve Bohr topologies and Bohr compactifications, which are explored in both \cite[Section~6]{dik11} and \cite[Section~2.7.1]{gra13}, which means that both $\varepsilon$-Kronecker sets and generalizations of the Kronecker-Weyl Theorem may be studied further in the setting of Bohr topologies.

\section*{Ethical approval} This is not applicable to research work in pure mathematics.

\section*{Funding} The author was supported by the Research Grants Management Office of De La Salle University, Taft Ave., Manila, Philippines, with grant no.: 02FR1TAY24-1TAY25.

\section*{Availability of data and materials} Studies in pure mathematics do not involve data sets, and hence a declaration on availability of data and materials is not applicable.

\end{document}